\newcommand{\re}{\text{\rm Re\,}}
\newcommand{\im}{\text{\rm Im\,}}
\newcommand{\bd}{{\mathbb{D}}}
\newcommand{\bn}{{\mathbb{N}}}
\newcommand{\br}{{\mathbb{R}}}
\newcommand{\bc}{{\mathbb{C}}}
\newcommand{\bt}{{\mathbb{T}}}
\newcommand{\cb}{{\mathcal{B}}}
\newcommand{\cf}{{\mathcal{F}}}
\newcommand{\cm}{{\mathcal{M}}}
\newcommand{\css}{{\mathcal{S}}}
\renewcommand{\a}{\alpha}
\renewcommand{\b}{\beta}
\newcommand{\s}{\sigma}
\newcommand{\p}{\varphi}
\renewcommand{\d}{\delta}
\renewcommand{\o}{\omega}
\newcommand{\g}{\gamma}
\renewcommand{\gg}{\Gamma}
\newcommand{\z}{\zeta}
\newcommand{\thth}{\Theta}
\newcommand{\lp}{\left(}
\newcommand{\rp}{\right)}
\newcommand{\lt}{\left\{}
\newcommand{\rt}{\right\}}
\DeclareMathOperator{\lc}{span}
\DeclareMathOperator{\supp}{supp}
\numberwithin{equation}{section}
\newtheorem{theorem}{Theorem}[section]
\newtheorem{lemma}[theorem]{Lemma}
\newtheorem{corollary}[theorem]{Corollary}
\newtheorem{proposition}[theorem]{Proposition}
\theoremstyle{definition}
\newtheorem{definition}[theorem]{Definition}
\newtheorem{remark}[theorem]{Remark}
\newtheorem{example}[theorem]{Example}
\begin{document}

\title[Extreme points]
{Extreme points in isometric embedding problem for model spaces}
\author[L. Golinskii]{L. Golinskii}

\address{B. Verkin Institute for Low Temperature Physics and
Engineering, 47 Science ave., Kharkiv 61103, Ukraine}
\email{golinskii@ilt.kharkov.ua}

\date{\today}

\keywords{model spaces; Schur functions; Blaschke products; inner functions; Clark measures; Nehari problem; minifunctions}
\subjclass[2010]{30J05, 30J10, 46B20}

\maketitle

\begin{abstract}
In 1996 A. Aleksandrov solved the isometric embedding problem for the model spaces $K_\Theta$ with an arbitrary
inner function $\Theta$. We find all extreme points of this convex set of measures in the case when $\Theta$ is a finite
Blaschke product, and obtain some partial results for generic inner functions.
\end{abstract}

\section*{Introduction}
\label{s0}

In \cite{al96} A. Aleksandrov settled the isometric embedding problem  for the model spaces $K_\thth:=H^2\ominus \thth H^2$.
Precisely, let $\Theta$ be an arbitrary, nonconstant inner function on the unit disk $\bd$, i.e., $\thth$ belongs to the unit ball of $H^\infty$
(the Schur class $\css$), and $|\thth|=1$ a.e. on the unit circle $\bt$. Denote by $\cm_+(\bt)$ the class of all finite, positive,
Borel measures $\bt$. The problem is to describe a collection $\cm(\thth)\subset\cm_+(\bt)$ of measures so that the identity operator 
(embedding) of the model space $K_\thth$ to the space $L^2_\s(\bt)$ is isometric. In other words, the equality
\begin{equation*}
\langle f,g\rangle_{\s}:=\int_{\bt} f(t)\overline{g(t)}\,\s(dt)=\int_{\bt} f(t)\overline{g(t)}\,m(dt)=\langle f,g\rangle_{m}, \ \
f,g\in K_\thth\cap C(\bt)
\end{equation*}
holds for each continuous functions $f,g\in K_\Theta$. Here $m$ is the normalized Lebesgue measure on $\bt$. As it turns out \cite{al95}, for such measures and
for each $f\in K_\thth$ the boundary values exist $\s$ a.e. on $\bt$, so the equality can be extended to the whole model space $K_\thth$.

The result of Aleksandrov looks as follows.

{\bf Theorem A}. {\it $\s\in\cm(\thth)$ if and only if there is a unique pair $(\b, \o)$ with a real number $\b$ and a Schur function $\o\in\css$ so that}
\begin{equation}\label{thal}
\frac{1+\thth(z)\o(z)}{1-\thth(z)\o(z)}=i\b+\int_{\bt}\frac{t+z}{t-z}\,\s(dt).
\end{equation}

For an alternative proof see \cite[Section 11.7]{GMR16}.

Relation \eqref{thal} can be viewed as a counterpart of the Nevanlinna parametrization for each indeterminate Hamburger moment problem, 
see \cite[Theorem 3.2.2] {Akh65} and Section \ref{s3} below.

\begin{remark} \label {rem1}
The function $\o$ in \eqref{thal} is an independent parameter, which runs over the class $\css$. Both $\b$ and $\s$ in \eqref{thal} are
uniquely determined by~$\o$,
$$\b=\frac{2\,\im\bigl(\o(0)\thth(0)\bigr)}{|1-\thth(0)\o(0)|^2}. $$
Conversely, if two triplets $\{\o_j,\b_j,\s\}$, $j=1,2$, satisfy \eqref{thal}, then $\o_1=\o_2$ and $\b_1=\b_2$. Indeed, we have
\begin{equation*}
\begin{split}
\frac{1+\thth\o_2}{1-\thth\o_2}-\frac{1+\thth\o_1}{1-\thth\o_1} &=i(\b_2-\b_1), \\
2\thth(\o_2-\o_1) &=i(\b_2-\b_1)(1-\thth\o_2)(1-\thth\o_1).
\end{split}
\end{equation*}
The function on the right side is outer, whereas the function on the left side has a nontrivial inner factor. So, $\o_1=\o_2$ and $\b_1=\b_2$, as claimed.
For instance, $\s=m$ enters the only triplet $\{0,0,m\}$. 

So, the equality \eqref{thal} generates a bijection $\cf$
\begin{equation}\label{bijec}
\cf:\ \css\to\cm(\thth), \quad \cf(\o)=\s.
\end{equation}
\end{remark}
\noindent
It is clear from \eqref{thal}, that the restriction of the map $\cf$ on the set of inner functions produces exactly the set of all singular measures in $\cm(\thth)$.

The isometric embedding problem has a long history, and there are at least two predecessors of A. Aleksandrov. L. de Branges \cite[Theorem 32]{Bra68}
solved the problem for meromorphic inner functions on the upper half-plane. For some follow-up results, including discussion of extreme points for the set
of de Branges measures, see \cite{bar}. Later on D. Sarason \cite{sar} proved the result
on the isometric embedding for inner functions $\thth$ with $\thth(0)=0$ and for measures of the form $|f|^2m$, $f\in H^2$. Since such measures
form a dense set in $\cm(\thth)$, the result of Aleksandrov can be deduced from that of Sarason (at least for $\thth(0)=0$).
So it seems reasonable referring to the measures from $\cm(\thth)$ as the {\it Aleksandrov--Sarason measures}.

Relations \eqref{thal} with unimodular constants $\o=\a\in\bt$
\begin{equation}\label{thal1}
\frac{1+\a\thth(z)}{1-\a\thth(z)}=i\b_\a+\int_{\bt}\frac{t+z}{t-z}\,\s_\a(dt)
\end{equation}
are well known in the theory of the model spaces \cite[Chapter 9]{CiMaR}, \cite[Chapter~11]{GMR16}. The measures $\s_\a$ in \eqref{thal1}
are the {\it Clark measures} following D. Clark \cite{cla}. Given $n=0,1,\ldots$ a measure $\s\in\cm(\thth)$ will be called a {\it Clark measure
of order} $n$ if the corresponding parameter $\o$ in \eqref{thal} is a finite Blaschke product (FBP) of order $n$.

The case $\thth(z)=z$ in \eqref{thal} arises in Geronimus' approach to the
theory of orthogonal polynomials on the unit circle \cite[Chapter 3]{Si1}. The model space is now the one dimensional space of constant functions,
and $\cm(\thth)$ is the set of all probability measures on $\bt$. 

The set $\cm(\thth)$ is easily seen to be a convex, compact in *-weak topology of the space $\cm_+(\bt)$ set. The study of the set 
$\cm_{ext}(\thth)$ of extreme points for $\cm(\thth)$ seems quite natural. This is exactly
the problem we address here. A point $\s\in\cm(\thth)$ is said to be an {\it extreme point of} $\cm(\thth)$ if
\begin{equation}\label{ext}
\s=\frac{\s_1+\s_2}2\,, \quad \s_j\in\cm(\thth) \ \ \Rightarrow \ \ \s_1=\s_2=\s.
\end{equation}
To paraphrase, there is no nontrivial representation of $\s$ as a convex linear combination of two Aleksandrov--Sarason measures.

We say that a measure $\s\in\cm_+(\bt)$ has finite support if
$$ \s=\sum_{j=1}^p s_j\d(t_j), \quad s_j>0, \qquad  \supp\s=\{t_j\}_{j=1}^p, \quad t_j=t_j(\s), $$
and write $|\supp\s|=p$ for the cardinality of the support. Denote by $\cm_f(\thth)$ the set of all measures in $\cm(\thth)$ with finite support.
It is clear from \eqref{thal} that $\cm_f(\thth)$ is nonempty if and only if both $\thth$ and $\o$ are FBP's.

The model space $K_\thth$ is finite dimensional, $\dim K_\thth<\infty$, if and only if $\thth=B$ is a FBP.
Our main result concerns this situation.

\begin{theorem}\label{mainth}
Let $B$ be a FBP of order $n\ge1$. A measure $\s\in\cm_{ext}(B)$ if and only if $\s\in\cm_f(B)$ and
\begin{equation}\label{mainres}
n\le |\supp\s|\le 2n-1.
\end{equation}
\end{theorem}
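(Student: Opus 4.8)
The plan is to trade the Schur-function parametrization for an elementary moment description. Since the isometry condition $\langle f,g\rangle_\s=\langle f,g\rangle_m$ for all $f,g\in K_B$ is literally the requirement that $\int_\bt h\,d\s=\int_\bt h\,dm$ for every $h$ in the linear space $V:=K_B\cdot\overline{K_B}$ of products restricted to $\bt$, the first step is to pin down $V$. Writing $K_B=\{p/D:\deg p\le n-1\}$ with $D(z)=\prod_j(1-\bar a_j z)$ (the $a_j$ being the zeros of $B$, with multiplicity), a direct computation on $\bt$ gives $p\bar q/|D|^2$, and as $p,q$ range over polynomials of degree $\le n-1$ the numerators $p\bar q$ span exactly the trigonometric polynomials $T$ of degree $\le n-1$. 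Hence $V=\{T/|D|^2\}$, its real part $V_\br=\{T/|D|^2:\ T\text{ real},\ \deg T\le n-1\}$ has real dimension $2n-1$, and, because $|D|^2>0$ on $\bt$, the change of variable $d\mu:=|D|^{-2}\,d\s$ identifies $\ss(B)$ with the set of positive measures $\mu$ whose Fourier coefficients agree with those of $d\mu_0:=|D|^{-2}\,dm$ for $|k|\le n-1$. Thus $\ss(B)$ is affinely isomorphic to a truncated trigonometric moment problem with $2n-1$ real constraints, and extreme points correspond under this isomorphism. The crucial consequence is the Chebyshev property: a nonzero $h\in V_\br$ vanishes on $\bt$ only where $T$ does, so it has at most $2(n-1)=2n-2$ zeros; equivalently, point evaluations at any $2n-1$ distinct points of $\bt$ are linearly independent functionals on $V_\br$.

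With this in hand, fix $\s\in\ss_f(B)$ with $\supp\s=\{t_j\}_{j=1}^p$. For the lower bound, an isometric embedding of the $n$-dimensional space $K_B$ into $L^2_\s\cong\bc^p$ is injective, forcing $p\ge n$ (equivalently, the Toeplitz matrix of $\mu_0$ has rank $n$, so no measure with fewer than $n$ atoms matches the moments). For sufficiency, suppose $\s=(\s_1+\s_2)/2$ with $\s_j\in\ss(B)$; then $\s_j\ll\s$, so each $\s_j$ is carried by $\{t_j\}_{j=1}^p$ and the signed measure $\t:=\s_1-\s$ is supported there and annihilates $V$. When $p\le 2n-1$, the independence of the evaluations forces $\t=0$, whence $\s_1=\s_2=\s$ and $\s$ is extreme. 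Conversely, if $p\ge 2n$ then on any $2n$ support points the homogeneous system $\sum_j\eps_j\,h(t_j)=0$ ($h\in V_\br$) has at most $2n-1$ equations and $2n$ unknowns, hence a nontrivial real solution $\t$; for small $\eps$ the measures $\s\pm\eps\t$ lie in $\ss(B)$ and are positive, giving a nontrivial splitting, so $\s$ is not extreme.

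It remains to show that every extreme point has finite support, which is where the global moment structure is used. Here I would invoke the classical fact that the extreme points of a set of positive measures subject to $N$ linear moment constraints are supported on at most $N$ points: if the support of an extreme $\s$ had more than $2n-1$ points (in particular if it were infinite), one could again solve the underdetermined annihilation system on $2n$ of them to produce $\t\ne0$ with $|\t|\le\s$ and $\int h\,d\t=0$, contradicting extremality via $\s\pm\eps\t$. Hence $\s\in\ss_{ext}(B)$ forces $\s\in\ss_f(B)$ with $p\le 2n-1$, and together with $p\ge n$ this yields \eqref{mainres}. Finally, the explicit description through the bijection $I$ confirms attainability: for $\o=I(\s)$ a finite Blaschke product, the atoms of $\s$ are the solutions of $B\o=1$ on $\bt$, all simple since $\arg(B\o)$ is strictly increasing, so $p=n+\deg\o$; the range $n\le p\le 2n-1$ is realized exactly by $0\le\deg\o\le n-1$, with $\deg\o=0$ recovering the Clark measures.

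I expect the main obstacle to be the structural identification in the first step, carried out uniformly over all finite Blaschke products: establishing that $K_B\overline{K_B}$ is precisely the weighted space $\{T/|D|^2\}$ of the stated dimension $2n-1$, and that the weight $|D|^2$ is bounded and positive so that the passage to the trigonometric moment problem is an honest affine isomorphism preserving positivity, supports, and extremality, even when the $a_j$ coincide. Once this normal form is secured, the extremal analysis reduces to the classical zero count for trigonometric polynomials and the elementary perturbation argument above; the alternative route through the nonlinear parametrization $I$ would instead face the difficulty of transporting extremality across a map that is not affine, which is precisely what the moment reduction is designed to avoid.
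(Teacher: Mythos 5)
Your proof is essentially correct but follows a genuinely different route from the paper for the key sufficiency step. The paper proves that measures with $n\le|\supp\s|\le 2n-1$ are extreme via Lemma \ref{le1}: it pulls the two candidate measures back through the parametrization $I$ to finite Blaschke products $\o_1,\o_2$ and shows that the polynomial $\g_1Q_1Q_2^*-\g_2Q_2Q_1^*$ would have too many zeros, forcing $\o_1=\o_2$. You instead reduce $\ss(B)$ affinely to a truncated trigonometric moment problem via $d\mu=|D|^{-2}d\s$ and observe that $V_\br$ is a Chebyshev system of dimension $2n-1$ on $\bt$, so the discrete annihilator $\t=\s_1-\s$, carried by at most $2n-1$ points, must vanish. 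This is more self-contained (no appeal to the Nevanlinna--Pick-type parametrization or to the structure of Blaschke products), and your identification $V=\{T/|D|^2:\deg T\le n-1\}$ works uniformly in the zeros of $B$, so it also dispenses with the paper's M\"obius change of variable reducing to the case $B(0)=0$. The paper's Lemma \ref{le1} yields somewhat more (a quantitative bound on how much the supports of two distinct finitely supported embedding measures can overlap), but for the theorem itself your argument suffices; the lower bound $|\supp\s|\ge n$ and the perturbation idea for the upper bound are the same in both texts.

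One step needs repair. In ruling out extreme points with $|\supp\s|\ge 2n$ ``in particular if it were infinite,'' you propose to solve the annihilation system ``on $2n$ of them,'' i.e., at $2n$ points of the support. If $\s$ has a nonatomic part, a nonzero discrete $\t$ supported at such points never satisfies $|\t|\le\s$, so $\s-\eps\t$ fails to be positive and the splitting breaks down. The correct version --- and the standard proof of the classical fact you invoke --- replaces the $2n$ points by $2n$ pairwise disjoint Borel sets $A_1,\dots,A_{2n}$ of positive $\s$-measure and takes $\t=\sum_j\eps_j\,\s|_{A_j}$; equivalently, one argues as the paper does, producing a norm-one functional on $L^1_\s$ that vanishes on the $(2n-1)$-dimensional subspace spanned by $V_\br$, realized by a real $\p_0\in L^\infty_\s$ with $|\p_0|\le1$, and setting $\s_\pm=(1\pm\p_0)\s$. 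With that substitution your argument is complete.
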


When studying subclasses of $\cm(\thth)$ it is sometimes reasonable to go to the $\cf$-preimages and consider the corresponding subclasses of the
Schur class instead. That is what we are going to do when dealing with the class $\cm_{ext}(\thth)$.

Denote by $\css_{ext}(\thth)\subset\css$ the set of extreme Schur functions
\begin{equation}\label{param}
\css_{ext}(\thth)=\{\o\in\css: \ \ \cf(\o)\in\cm_{ext}(\thth)\}.
\end{equation}
The above result has an equivalent form.

\begin{theorem}\label{mainth1}
Let $B$ be a FBP of order $n\ge1$. The set $\css_{ext}(B)$ agrees with the set of all FBP's of the order at most $n-1$.
\end{theorem}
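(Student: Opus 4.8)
The plan is to obtain Theorem~\ref{mainth1} from Theorem~\ref{mainth} by pushing the support bound \eqref{mainres} through the bijection $I$ of \eqref{bijec}. By definition $\css_{ext}(B)=I\bigl(\ss_{ext}(B)\bigr)$, and for a FBP $B$ a measure $\s$ belongs to $\ss_f(B)$ exactly when $\o=I(\s)$ is a FBP. Hence the whole assertion collapses to one counting identity: \emph{if $\o$ is a FBP of order $m\ge0$, then the $\s\in\ss_f(B)$ determined by \eqref{thal} satisfies $|\supp\s|=n+m$.} Granting this, the bound $n\le|\supp\s|\le 2n-1$ translates into $0\le m\le n-1$, so that $\css_{ext}(B)$ is precisely the set of FBP's of order at most $n-1$, which is Theorem~\ref{mainth1}. (The extreme case $m=0$ should be read as the unimodular constants, which are the FBP's of order zero; the constant $\o\equiv0$ is excluded, since it produces $\s=m$, not a finitely supported measure.)

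To prove the identity I would locate the atoms of $\s$ from the right-hand side of \eqref{thal}. Put
\begin{equation*}
F(z)=\frac{1+B(z)\o(z)}{1-B(z)\o(z)}=i\b+\int_{\bt}\frac{t+z}{t-z}\,\s(dt).
\end{equation*}
Because $|B\o|<1$ in $\bd$, the function $F$ has positive real part, so it is a Carath\'eodory function whose Riesz--Herglotz measure is the positive measure $\s$. The atoms of $\s$ therefore sit exactly at the boundary poles of $F$, i.e.\ at those $t\in\bt$ with $B(t)\o(t)=1$; at such a point $1-B\o$ vanishes while $1+B\o=2\ne0$, so the pole is genuine and, $\s$ being positive, carries positive mass. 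Thus $|\supp\s|$ equals the number of solutions of $B\o=1$ on $\bt$.

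The remaining task, and the only delicate point, is to count these solutions with the correct multiplicity. The product $B\o$ is a FBP of order $n+m$, since orders add under multiplication even when $B$ and $\o$ share zeros. For any FBP $\psi$ with zeros $a_k$ one has
\begin{equation*}
\frac{d}{d\th}\,\arg\psi(e^{i\th})=\sum_k\frac{1-|a_k|^2}{|e^{i\th}-a_k|^2}>0,
\end{equation*}
so $\arg\bigl(B(e^{i\th})\o(e^{i\th})\bigr)$ is strictly increasing and winds exactly $n+m$ times around $\bt$. Consequently $B\o=1$ has exactly $n+m$ solutions on $\bt$, each simple because the argument derivative never vanishes; no two atoms can coalesce and none is spurious. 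This gives $|\supp\s|=n+m$ and closes the reduction. I expect the write-up to be short, the one item deserving care being precisely this strict monotonicity argument, which simultaneously pins down the count and rules out higher-order boundary poles (which would in any case be incompatible with $\s\in\cm_+(\bt)$).
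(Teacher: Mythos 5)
Your proposal is correct and follows essentially the paper's own route: Theorem~\ref{mainth1} is read off from Theorem~\ref{mainth} via the bijection $I$ of \eqref{bijec} together with the counting identity $|\supp\s|=n+\deg\o$, which the paper simply asserts in Section~\ref{s1}. Your winding-number argument for that identity (strict increase of $\arg(B\o)$ on $\bt$, giving exactly $n+\deg\o$ simple solutions of $B\o=1$) is sound and merely fills in a step the paper leaves to the reader.
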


The case of generic inner functions $\thth$ is much more delicate. We supplement the above result with the following

\begin{theorem}\label{finblext}
Let $\thth$ be a nonconstant inner functions, which is not a FBP. Then each FBP belongs to $\css_{ext}(\thth)$. Equivalently, each
Clark measure of order $n=0,1,\ldots$ belongs to $\cm_{ext}(\thth)$.
\end{theorem}

We say that an inner function $\p$ is a divisor of $\thth$ if $\thth/\p$ is again the inner function.

\begin{theorem}\label{divis}
Let $\thth$ be an arbitrary, nonconstant inner function. Then each divisor of  $\thth$, distinct from $\thth$,
belongs to $\css_{ext}(\thth)$, but $\thth\notin\css_{ext}(\thth)$.
\end{theorem}

The case of  unimodular constant divisors corresponds to the Clark measures.

\begin{corollary}\label{alcl}
Let $\thth$ be an arbitrary, nonconstant inner function. Then the Clark measures $\s_\a\in\cm_{ext}(\thth)$ for all $\a\in\bt$.
\end{corollary}

We examine the class $\cm_f(B)$ of measures with finite support in Section~\ref{s1}, and prove Theorem \ref{mainth} in Section \ref{s2}. In
Section \ref{s3}, given an inner function $\thth$, we introduce a binary operation $\css\times\css\to\css$ ($\thth$-product). As it turns out,
a Schur function $\o\notin\css_{ext}(\thth)$ if and only if $\o$ admits a nontrivial factorization with respect to the $\thth$-product.
Thereby, the ``$\thth$-prime" functions $\o$ constitute the class $\css_{ext}(\thth)$.
The results of Theorems \ref{finblext} and \ref{divis} are obtained along this line of reasoning, by using some uniqueness conditions for
the classical Nehari problem.

So far we have met only inner functions $\o$ in $\css_{ext}(\thth)$. By Theorem \ref{mainth1}, for FBP's $B$
all functions in $\css_{ext}(B)$ are inner. We show that in the opposite case, that is, $\dim K_\thth=\infty$, there are non-inner function
$\o\in\css_{ext}(\thth)$. In other words, the set $\cm_{ext}(\thth)$ contains measures with a nontrivial absolutely continuous part.

\section{Some properties of the class $\cm_f(B)$}
\label{s1}

Given a FBP $B$ of order $n$, we denote by
$$ \{(z_1,r_1), (z_2,r_2), \ldots, (z_d,r_d)\}, \qquad z_i\not=z_j, \quad i\not=j, \quad r_j\in\bn, $$
the set of its zeros, so that
$$ B(z):=\prod_{k=1}^d \lp\frac{|z_k|}{z_k}\,\frac{z_k-z}{1-\bar z_k z}\rp^{r_k}\,,
\qquad \deg B=r_1+\ldots+r_d=n. $$
The model space
\begin{equation}\label{finmodsp}
K_B:=H^2\ominus BH^2=\lt h(z)=\frac{P(z)}{\prod_{j=1}^d (1-\bar z_j z)^{r_j}}\,, \quad \deg P\le n-1 \rt,
\end{equation}
is the finite dimensional space of all rational functions with the poles at the points $1/\bar z_j$ of degree at most $r_j$, $\dim K_B=n$.
The case $z_d=0$, i.e., $B(0)=0$, will be of particular concern. Now
$$ K_B=\lt h(z)=\frac{P(z)}{\prod_{j=1}^{d-1} (1-\bar z_j z)^{r_j}}\,, \quad \deg P\le n-1 \rt, $$
and the monomials $1,z,\ldots,z^{r_d-1}\in K_B$. Put
$$ \p_0(z)=1, \quad \p_k(z):=\frac1{1-\bar z_k z}\,, \quad k=1,2,\ldots,d-1, \quad \p_d(z)=z, $$
so the standard basis in $K_B$ is
\begin{equation}\label{basis1}
\{\p_1,\p_1^2, \ldots, \p_1^{r_1}; \ldots; \p_{d-1},\p_{d-1}^2, \ldots, \p_{d-1}^{r_{d-1}}; \p_d,\ldots,\p_d^{r_d-1};\p_0\}.
\end{equation}
It seems reasonable rearranging these functions in a unique sequence $\{e_l\}_{l=1}^n$, $e_n=1$.

The following result is a consequence of Theorem A, but we give a simple, direct proof.

\begin{proposition}\label{pr1}
The support of each measure $\s\in\cm(B)$ contains at least $n$ points.
\end{proposition}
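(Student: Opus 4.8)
The plan is to exploit the fact that the defining identity $\langle f,g\rangle_\s=\langle f,g\rangle_m$ turns the embedding $K_B\hookrightarrow L^2_\s(\bt)$ into an isometry, hence an \emph{injective} linear map, and then to compare dimensions. If $\supp\s$ is infinite there is nothing to prove, so I would focus on a measure with finite support, say $\s=\sum_{j=1}^p s_j\,\d(t_j)$ with $s_j>0$ and $p=|\supp\s|$. For such $\s$ the space $L^2_\s(\bt)$ is isometrically the $p$-dimensional space $\bc^p$ equipped with the weighted inner product $\langle u,v\rangle=\sum_{j=1}^p s_j u_j\bar v_j$, and the embedding sends $h\in K_B$ to the vector of its boundary values $(h(t_1),\dots,h(t_p))$.

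A point worth checking first is that this evaluation map is well defined: every $h\in K_B$ is a rational function whose poles $1/\bar z_j$ lie outside the closed disk, so $h$ is continuous on $\bt$ and the values $h(t_j)$ make sense. Now suppose, toward a contradiction, that $p<n=\dim K_B$. Applying the evaluation map to the basis $\{e_l\}_{l=1}^n$ of $K_B$ produces $n$ vectors in $\bc^p$; since $n>p$ they are linearly dependent, so there is a nontrivial combination $h=\sum_{l=1}^n c_l e_l\in K_B$, $h\neq0$, with $h(t_j)=0$ for every $j=1,\dots,p$.

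This $h$ then has $\|h\|_\s^2=\sum_{j=1}^p s_j|h(t_j)|^2=0$, whereas the isometry forces $\|h\|_\s^2=\|h\|_m^2$, and the latter is strictly positive because a nonzero element of $K_B\subset H^2$ cannot vanish $m$-a.e.\ on $\bt$. This contradiction shows $p\ge n$. I expect no serious obstacle here; the only points needing care are the reduction to the finite-support case and the verification that the evaluation functionals are legitimate, both of which are immediate from the rational structure of $K_B$ recorded above. Equivalently, one may phrase the whole argument as: injectivity of the isometric embedding gives $\dim K_B\le\dim L^2_\s(\bt)$, i.e.\ $n\le|\supp\s|$.
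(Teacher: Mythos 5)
Your argument is correct and is essentially the paper's own proof: both rest on the dimension count $\dim L^2_\s(\bt)\le|\supp\s|<n=\dim K_B$, which forces a nonzero element of $K_B$ to have $\s$-norm zero while its $m$-norm is positive, contradicting the isometry. The paper phrases the same contradiction via the Gram determinant of the basis $\{e_l\}_{l=1}^n$ (zero in $L^2_\s$, nonzero in $L^2_m$) rather than exhibiting a null vector, but the substance is identical.
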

\begin{proof}
If $|\supp\s|\le n-1$, then $\dim L^2_\s(\bt)\le n-1$, and the functions $\{e_l\}_{l=1}^n$ in \eqref{basis1} are linearly dependent in $L^2_\s(\bt)$, so
$$ \det \|\langle e_j,e_k\rangle_\s\|_{j,k=1}^n=0. $$
On the other hand, the same system is linearly independent in $L^2_m(\bt)$, so $\det \|\langle e_j,e_k\rangle_m\|_{j,k=1}^n\not=0$.
The contradiction completes the proof.
\end{proof}

As we mentioned in Introduction, a measure $\s\in\cm_f(B)$ if and only if $\o=I^{(-1)}(\s)$ is a FBP. Moreover, $|\supp\s|=n+\deg\o$,
so $|\supp\s|=n$ if and only if $\s=\s_\a$ is the Clark measure \eqref{thal1}.

It is not hard to display $\s\in\cm_f(B)$ explicitly in terms of the corresponding parameters $\o$ and $B$. Indeed, \eqref{thal} now takes the form
\begin{equation}\label{alfin}
\frac{1+B(z)\o(z)}{1-B(z)\o(z)}=i\b+\sum_{k=1}^p \frac{t_k+z}{t_k-z}\,s_k,
\end{equation}
and
\begin{equation}\label{support}
\supp\s=\{t_j\}_{j=1}^p: \quad B(t_j)\o(t_j)=1, \quad j=1,2,\ldots,p.
\end{equation}
The weights $s_j$ can be determined from the limit relations
$$ 2t_qs_q=\bigl(1+B(t_q)\o(t_q)\bigr)\,\lim_{z\to t_q}\frac{t_q-z}{1-B(z)\o(z)}=\frac2{[B\o]'(t_q)}\,,  $$
or, in view of \eqref{support},
$$
\frac1{s_q}=t_q[B\o]'(t_q)=t_q\,\frac{B'(t_q)}{B(t_q)}+t_q\,\frac{\o'(t_q)}{\o(t_q)}\,. $$
A computation of the logarithmic derivative of a FBP is standard
$$ \frac{B'(z)}{B(z)}=\sum_{k=1}^d r_k\,\frac{1-|z_k|^2}{(1-\bar z_k z)(z-z_k)}\,. $$
So,
\begin{equation}\label{weights}
\frac1{s_q}=\sum_{k=1}^d r_k\,\frac{1-|z_k|^2}{|t_q-z_k|^2}+\sum_{j=1}^m \frac{1-|w_j|^2}{|t_q-w_j|^2}\,,
\end{equation}
where $w_1,\ldots,w_m$ are all zeros (counting multiplicity) of $\o$ in \eqref{alfin}.

The relation \eqref{weights} provides an answer to the following ``extremal mass problem'': given a point $\tau\in\bt$, find a measure
$\s_{\max}\in\cm_f(B)$ so that
$$ \s_{\max}\{\tau\}=\max\{\s\{\tau\}: \ \ \s\in\cm_f(B)\}. $$
Indeed, such measure is exactly the Clark measure $\s=\s_\a$ with $\a=B^{-1}(\tau)$, $|\supp\s_{\max}|=n$, and
$$ \frac1{\s_{\max}\{\tau\}}=\sum_{k=1}^d r_k\,\frac{1-|z_k|^2}{|\tau-z_k|^2}. $$

\begin{remark}
As a matter of fact, the above Clark measure solves the same extremal problem within the whole class $\cm(B)$.
Relation \eqref{weights} holds in the form
$$ \frac1{s_q}=\sum_{k=1}^d r_k\,\frac{1-|z_k|^2}{|\tau-z_k|^2}+|\o'(\tau)|, $$
where $\o'$ is the angular derivative of $\o$ (cf. \cite[Section 9.2]{CiMaR}).
\end{remark}

Here is another simple property of measures $\s\in\cm_f(B)$.

\begin{proposition}\label{pr3}
Let $\{t_j\}_{j=1}^p$ be an arbitrary set of distinct points on $\bt$. There is a measure $\s\in\cm_f(B)$ such that
\begin{enumerate}
  \item $\{t_j\}\in\supp\s$;
  \item $|\supp\s|\le n+p-1$.
\end{enumerate}
\end{proposition}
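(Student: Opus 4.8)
The plan is to translate the statement, via Theorem~A and the description of $\ss_f(B)$ obtained just above, into a boundary interpolation problem for finite Blaschke products, and then solve that problem. Recall that by \eqref{thal} a measure $\s\in\ss_f(B)$ corresponds to a FBP $\o$ with $|\supp\s|=n+\deg\o$, that $\supp\s$ consists precisely of the solutions on $\bt$ of $B(t)\o(t)=1$ (see \eqref{support}), and that each such solution carries a strictly positive mass by \eqref{weights}. Since $|B(t_j)|=1$, the choice $\o(t_j)=\overline{B(t_j)}$ forces $B(t_j)\o(t_j)=1$. Hence it suffices to produce a FBP $\o$ with $\deg\o\le p-1$ satisfying the $p$ conditions
\begin{equation*}
\o(t_j)=\overline{B(t_j)},\qquad j=1,\ldots,p.
\end{equation*}
Indeed, $B\o$ is then inner of degree $n+\deg\o$, so $B\o=1$ has exactly $n+\deg\o\le n+p-1$ solutions on $\bt$, all simple (the boundary derivative of a FBP never vanishes), hence distinct; all of them are atoms of the associated $\s$ by the positivity in \eqref{weights}, and the prescribed $t_j$ are among them. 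This yields both (1) and (2) at once.

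First I would isolate the interpolation fact to be proved: for distinct $t_1,\ldots,t_p\in\bt$ and arbitrary unimodular targets $\g_1,\ldots,\g_p\in\bt$ there is a FBP $\o$ of degree at most $p-1$ with $\o(t_j)=\g_j$. I would establish this by a degree/continuity argument for the boundary evaluation map $\o\mapsto(\o(t_1),\ldots,\o(t_p))$. Writing a degree-$(p-1)$ FBP as $\o(z)=\l\prod_{k=1}^{p-1}(z-a_k)(1-\bar a_k z)^{-1}$ gives a $(2p-1)$-real-parameter family, and on $\bt$ each such $\o$ is an orientation-preserving local diffeomorphism of winding number $p-1$. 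The natural route is to pass to the phase function $\th\mapsto\arg\o(e^{i\th})$, which is smooth, strictly increasing, and increases by $2\pi(p-1)$ over a period; one prescribes the values of this phase at $\arg t_j$ (modulo $2\pi$) in a way consistent with the total increment and then realizes the resulting monotone phase data by an honest Blaschke product, letting zeros tend to $\bt$ to create sharp phase transitions near the $t_j$ and extracting surjectivity from a topological-degree/compactness argument. An alternative, more computational route writes $\o=\l P/P^{*}$ with $P^{*}(z)=z^{p-1}\overline{P(1/\bar z)}$, so that on $\bt$ the conditions become a real-linear system for the coefficients of $P$; the extra work there is to guarantee a solution whose $P$ has all its zeros inside $\bd$.

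The hard part is precisely this interpolation lemma, and its difficulty is that it is genuinely global: $\o$ cannot be built greedily. If one tried to pass from an interpolant at $t_1,\ldots,t_{p-1}$ to one at all $p$ points by multiplying by a single degree-one factor $b$, one would need $b\equiv1$ at the first $p-1$ points, whereas a degree-one Blaschke product equals $1$ at exactly one point of $\bt$; the conditions must therefore be met simultaneously, and the bound $\deg\o\le p-1$ is forced by the winding number rather than by a parameter count (for $p\ge3$ there are already enough real parameters in degree $p-2$, yet the cyclic orientation of the target tuple can obstruct such lower degree). The delicate points are thus the realizability of the prescribed boundary phase data by a true FBP of winding at most $p-1$, and the control of degenerations as zeros approach $\bt$ (respectively, the zero-location issue in the $P/P^{*}$ route). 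Once the lemma is available, the reduction of the first paragraph completes the proof, with \eqref{weights} ensuring that the $t_j$ are actual atoms of $\s$ and $|\supp\s|=n+\deg\o\le n+p-1$.
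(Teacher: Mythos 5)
Your reduction is exactly the paper's: produce a FBP $\o$ of degree at most $p-1$ with $\o(t_j)=B^{-1}(t_j)$ for $j=1,\dots,p$, so that the measure associated to $\o$ via \eqref{alfin} has the $t_j$ among its $n+\deg\o\le n+p-1$ atoms. The boundary interpolation lemma you correctly identify as the crux is precisely \cite[Theorem~1]{jorus}, which the paper simply cites; your winding-number/phase-function sketch of that lemma is plausible but left incomplete (you yourself flag the realizability and degeneration issues), so a self-contained write-up should either carry that argument through or, as the paper does, invoke the Jones--Ruscheweyh result.
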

\begin{proof}
The proof is based on the interpolation with FBP's (see, e.g., \cite[Theorem~1]{jorus}): there is a FBP $\o$ so that
$\deg\o\le p-1$ and
$$ \o(t_j)=B^{-1}(t_j), \qquad j=1,\ldots, p. $$
The corresponding measure $\s$ in \eqref{alfin} is the one we need.
\end{proof}

It turns out that the intersection of supports of two different measures from $\cm_f(B)$ can not be too large.
Put
$$ \cm_{n+k}(B):=\{\s\in\cm_f(B): \ \ |\supp\s|=n+k\}, \quad k=0,1,\ldots. $$

\begin{lemma}\label{le1}
Let $\s_j\in\cm_{n+p_j}(B)$, $j=1,2$, and let
$$ |\supp\s_1\cap\supp\s_2|\ge p_1+p_2+1. $$
Then $\s_1=\s_2$.
\end{lemma}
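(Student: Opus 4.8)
The plan is to transfer the hypothesis about supports into a statement about the Schur parameters $\o_j=I(\s_j)$ and then reduce everything to an elementary polynomial root count. First I would record that, by the discussion following \eqref{support}, each $\s_j\in\ss_{n+p_j}(B)$ corresponds to a FBP $\o_j$ of degree exactly $p_j$ (since $|\supp\s_j|=n+\deg\o_j$), and that
$$ \supp\s_j=\{t\in\bt:\ B(t)\o_j(t)=1\}. $$
Because $B$ is a finite Blaschke product, $|B(t)|=1$ for every $t\in\bt$, so $B(t)\not=0$ there. Consequently, if $t$ lies in $\supp\s_1\cap\supp\s_2$, then $B(t)\o_1(t)=B(t)\o_2(t)=1$ forces $\o_1(t)=\o_2(t)=1/B(t)$. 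The hypothesis therefore provides at least $p_1+p_2+1$ distinct points of $\bt$ at which $\o_1$ and $\o_2$ coincide.

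Next I would turn the coincidence $\o_1=\o_2$ into a polynomial identity. Writing $\o_j=P_j/Q_j$ with $P_j(z)=c_j\prod_i(z-a_i^{(j)})$ and $Q_j(z)=\prod_i(1-\bar a_i^{(j)}z)$, both of degree $p_j$, the difference becomes
$$ \o_1-\o_2=\frac{P_1Q_2-P_2Q_1}{Q_1Q_2}\,. $$
The numerator $N:=P_1Q_2-P_2Q_1$ is a polynomial of degree at most $p_1+p_2$. On $\bt$ the denominator does not vanish, since the zeros of $Q_j$ sit at the reflected points $1/\bar a_i^{(j)}$ outside $\ovl{\bd}$; hence every common point produced in the previous step is a genuine zero of $N$.

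The decisive step is then the root count: $N$ has at least $p_1+p_2+1$ distinct zeros but degree at most $p_1+p_2$, so $N\equiv0$ and $\o_1\equiv\o_2$ as rational functions. Finally, invoking the bijection $I$ of Remark \ref{rem1}, which is in particular injective, the equality $\o_1=\o_2$ yields $\s_1=\s_2$, as claimed. I do not expect a serious obstacle here; the only point demanding care is the bookkeeping of degrees, and it is precisely this bookkeeping that makes the threshold $p_1+p_2+1$ (rather than $p_1+p_2$) the correct and sharp hypothesis.
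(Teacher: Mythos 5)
Your proof is correct and follows essentially the same route as the paper: translate common support points into coincidence points of $\o_1$ and $\o_2$ via \eqref{support}, cross-multiply to get a polynomial of degree at most $p_1+p_2$ with $p_1+p_2+1$ zeros, conclude $\o_1=\o_2$, and invoke the injectivity from Remark \ref{rem1}. (The only cosmetic difference is that the paper writes $\o_j=\g_jQ_j/Q_j^*$ with reversed polynomials, and your claim that $\deg Q_j=p_j$ should read $\deg Q_j\le p_j$ when some zero $a_i^{(j)}=0$, but this does not affect the degree bound on $N$.)
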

\begin{proof}
Let $\o_j$ be the FBP's for $\s_j$ in \eqref{alfin}, $\deg\o_j=p_j$, $j=1,2$. Let \newline
$\z_1,\ldots,\z_{p_1+p_2+1}\in \supp\s_1\cap\supp\s_2$,
so, by \eqref{support},
$$ \o_1(\z_l)=\o_2(\z_l), \qquad l=1,2,\ldots, p_1+p_2+1. $$
Note that
$$ \o_j(z)=\g_j\,\frac{Q_j(z)}{Q_j^*(z)}\,, \quad j=1,2, $$
where $\g_j$ are unimodular constants, $Q_j$ are algebraic polynomials, $Q_j^*$ are the reversed polynomials, and
$$ \deg Q_j=p_j, \qquad \deg Q_j^*\le p_j, \qquad j=1,2. $$
We see that for the polynomial
$$ Q(z)=\g_1Q_1(z)Q_2^*(z)-\g_2Q_2(z)Q_1^*(z), \qquad \deg Q\le p_1+p_2, $$
the relations
$$ Q(\z_l)=0, \qquad l=1,2,\ldots, p_1+p_2+1 $$
hold, so $Q=0$, $\o_1=\o_2$, and $\s_1=\s_2$ (see Remark \ref{rem1}).
\end{proof}

\begin{corollary}
If $\s_j\in\cm_n(B)$, $j=1,2$, and $\supp\s_1\cap\supp\s_2\not=\emptyset$, then $\s_1=\s_2$.
If $\s_j\in\cm_{n+k}(B)$, $k=0,1,\ldots,n-1$, and $\supp\s_1=\supp\s_2$, then $\s_1=\s_2$.
\end{corollary}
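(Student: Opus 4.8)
The plan is to obtain both assertions as direct specializations of Lemma~\ref{le1}, with no new work beyond matching the cardinality hypothesis. The point is that membership of $\s_j$ in $\ss_{n+k}(B)$ fixes the degree of the associated finite Blaschke product to be $p_j=k$, so the only quantity I need to control is $|\supp\s_1\cap\supp\s_2|$ and compare it against the threshold $p_1+p_2+1$ that appears in the Lemma.

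For the first statement, both measures lie in $\ss_n(B)=\ss_{n+0}(B)$, so $p_1=p_2=0$. The hypothesis $\supp\s_1\cap\supp\s_2\not=\emptyset$ gives $|\supp\s_1\cap\supp\s_2|\ge 1=p_1+p_2+1$, and Lemma~\ref{le1} applies verbatim to force $\s_1=\s_2$.

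For the second statement, $p_1=p_2=k$, and the assumption $\supp\s_1=\supp\s_2$ says the common support has exactly $n+k$ points, so $|\supp\s_1\cap\supp\s_2|=n+k$. Lemma~\ref{le1} requires this to be at least $p_1+p_2+1=2k+1$; the inequality $n+k\ge 2k+1$ is equivalent to $k\le n-1$, precisely the stated range. Hence the Lemma again yields $\s_1=\s_2$.

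There is no genuine obstacle here — the argument is pure bookkeeping. The only thing worth flagging is that the arithmetic threshold in Lemma~\ref{le1} matches the endpoint $k=n-1$ exactly: once $k\ge n$ one has $n+k<2k+1$, so the Lemma provides no information and the conclusion can fail. This is the same mechanism that makes the upper bound $|\supp\s|\le 2n-1$ in Theorem~\ref{mainth} sharp, so the restriction on $k$ in the second statement cannot be relaxed.
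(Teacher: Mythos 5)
Your proposal is correct and is exactly the intended derivation: the paper states this as an immediate corollary of Lemma~\ref{le1} with no separate proof, and your bookkeeping ($p_1=p_2=0$ with intersection $\ge 1$ for the first claim; $p_1=p_2=k$ with $n+k\ge 2k+1$ iff $k\le n-1$ for the second) is precisely how it follows. The remark on sharpness at $k=n-1$ is a nice bonus consistent with Theorem~\ref{mainth}.
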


\section{Extreme points of $\cm(\thth)$ for finite dimensional model spaces}
\label{s2}

We begin with the result which provides an upper bound in  \eqref{mainres}. It can be viewed as
a counterpart of \cite[Theorem 2.3.4]{Akh65} for the classical moment problem.

\begin{proposition}\label{pr2}
Let $\s\in\cm_{ext}(B)$. Then $\s\in\cm_f(B)$ and $|\supp\s|\le 2n-1$.
\end{proposition}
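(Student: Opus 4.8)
The plan is to realize $\ss(B)$ as the solution set of a finite system of linear moment constraints and then apply the standard dichotomy for extreme points of such sets; this is the unit-circle analogue of \cite[Theorem 2.3.4]{Akh65}. First I would record, by polarization of $\langle\cdot,\cdot\rangle_\s=\langle\cdot,\cdot\rangle_m$ on $K_B$, that
\begin{equation*}
\s\in\ss(B)\iff \int_{\bt}\psi\,d\s=\int_{\bt}\psi\,dm\quad\text{for all }\psi\in\mathcal{R}:=\lc\{f\bar g:\ f,g\in K_B\}.
\end{equation*}
Since $\mathcal{R}$ is invariant under complex conjugation, testing against $\mathcal R$ is the same as testing against its real subspace $\{\psi\in\mathcal R:\bar\psi=\psi\}$, so membership in $\ss(B)$ amounts to $N:=\dim_\bc\mathcal{R}$ real linear conditions on $\s$.

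The crucial point—where the circle geometry enters—is the bound $N\le 2n-1$. Writing $f=P_f/Q$, $g=P_g/Q$ with $\deg P_f,\deg P_g\le n-1$ and $Q(z)=\prod_j(1-\bar z_jz)^{r_j}$, and using $\bar z=1/z$ on $\bt$ together with $\ovl{Q(z)}=z^{-n}\wt Q(z)$ (where $\wt Q(z)=\prod_j(z-z_j)^{r_j}$) and $\ovl{P_g(z)}=z^{-(n-1)}P_g^*(z)$ (reversed polynomial, as in Lemma~\ref{le1}), one computes
\begin{equation*}
f(z)\bar g(z)=\frac{z\,P_f(z)P_g^*(z)}{Q(z)\wt Q(z)}\,,\qquad z\in\bt,
\end{equation*}
whose numerator lies in $\lc\{z,z^2,\ldots,z^{2n-1}\}$. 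Hence $\mathcal{R}\subseteq (Q\wt Q)^{-1}\lc\{z,\ldots,z^{2n-1}\}$ and $N\le 2n-1$. When $B(0)=0$ the same count goes through using the basis \eqref{basis1}, and for $B(z)=z^n$ one simply has $\mathcal R=\lc\{z^j:|j|\le n-1\}$, of dimension exactly $2n-1$.

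Finally I would run the perturbation argument. Suppose $\s$ either has a nontrivial continuous part or carries at least $N+1$ atoms; in either case $\dim L^2_\s(\bt)>N$, so the at most $N$-dimensional image of $\mathcal R$ inside $L^2_\s(\bt)$ has a nonzero orthogonal complement, from which I may select a real $h\in L^\infty_\s(\bt)$, $h\not\equiv 0$, with $\int_{\bt}\psi h\,d\s=0$ for every $\psi\in\mathcal R$. Then $\s_\pm:=(1\pm\eps h)\,\s$ are positive for small $\eps>0$, lie in $\ss(B)$, are distinct, and satisfy $\s=\frac12(\s_++\s_-)$, contradicting extremality. Thus an extreme $\s$ is purely atomic with at most $N\le 2n-1$ atoms, i.e. $\s\in\ss_f(B)$ and $|\supp\s|\le 2n-1$. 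The main obstacle is precisely the sharp bound $N\le 2n-1$: the naive estimate from $\dim K_B=n$ only gives $N\le n^2$, and it is the identity $\bar z=1/z$—collapsing all products $f\bar g$ into a $(2n-1)$-dimensional space of rational functions with a common denominator—that produces the correct exponent. A secondary technical point is arranging that the perturbation $h$ be bounded when $\s$ has a continuous part, which is handled by localizing $h$ to that part.
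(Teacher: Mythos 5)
Your proof is correct and follows the same overall strategy as the paper: identify the span $E$ of all products $f\bar g$, $f,g\in K_B$, as a space of dimension at most $2n-1$, and then, if $\s$ has a continuous part or too many atoms, produce a real bounded $h\perp E$ in $L^2_\s$ and split $\s$ as $\tfrac12(\s_++\s_-)$ with $\s_\pm=(1\pm\eps h)\s$. Where you differ is in the dimension count: the paper first reduces to the case $B(0)=0$ by a M\"obius change of variable and then verifies by an explicit induction that each product $e_m\overline{e_l}$ of partial-fraction basis elements stays in the $(2n-1)$-dimensional real span of $\re\p_k^j$, $\im\p_k^j$; you instead use $\bar z=1/z$ on $\bt$ to write every $f\bar g$ as $zP_fP_g^*/(Q\wt Q)$ with a fixed denominator and numerator in $\lc\{z,\dots,z^{2n-1}\}$. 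Your route is more uniform (no reduction to $B(0)=0$, no induction) and makes the appearance of the exponent $2n-1$ transparent. One small point to tighten: the function orthogonal to $E$ in $L^2_\s$ is a priori only in $L^2$; the cleanest fix is to start from a bounded real $h_0$ outside the real span of $E$ and subtract its $L^2_\s$-projection onto that span, which is bounded because $E$ consists of rational functions continuous on $\bt$ (the paper sidesteps this by invoking $L^1$--$L^\infty$ duality to get $|\p_0|\le1$ directly).
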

\begin{proof}
Assume first that $z_d=0$. Define a system of real valued, linearly independent functions on $\bt$, accompanying \eqref{basis1}
\begin{equation*}
\begin{split}
x_{k,j}(t) &:=\re \p_k^j(t), \quad y_{k,j}(t):=\im \p_k^j(t), \quad j=1,\ldots, r_k, \quad  k=1,\ldots, d-1,  \\
x_{d,j}(t) &:=\re t^j, \quad \quad \ \ y_{d,j}(t):=\im t^j, \quad \quad \ \ j=1,\ldots, r_d-1, \quad x_{d,0}=1.
\end{split}
\end{equation*}
We arrange  them in a sequence $\{v_l\}_{l=1}^{2n-1}$, and denote by $E$ their complex, linear span
$$ E:=\lc_{1\le l\le 2n-1}\{v_l\}, \qquad \dim E=2n-1. $$
Clearly, $t^l\in E$ for $|l|\le r_d-1$, and
$$ \p_k^j=x_{k,j}+iy_{k,j}\in E, \qquad \overline{\p_k^j}=x_{k,j}-iy_{k,j}\in E $$
(or $e_m, \overline{e_m}\in E$) for the appropriate values of $k,j,m$.
It is a matter of a direct computation to make sure that the product $e_m\overline{e_l}\in E$,
$m,l=1,\ldots,n$. For instance,
\begin{equation*}
\begin{split}
\p_p(t)\overline{\p_q(t)} &=\frac1{(1-\bar z_p t)(1-z_q\bar{t})}=\frac{\p_p(t)+\overline{\p_q(t)}-1}{1-\bar z_p z_q}\,, \\
\p_p^2(t)\overline{\p_q(t)} &=\frac1{(1-\bar z_p t)^2(1-z_q\bar{t})}=\frac{\p_p^2(t)+\p_p(t)\overline{\p_q(t)}-\p_p(t)}{1-\bar z_p z_q}\,,
\end{split}
\end{equation*}
etc. The rest is a simple induction. We conclude, thereby, that $f\overline{g}\in E$ for each $f,g\in K_B$.

Assume next, that $|\supp\s|\ge 2n$. Then the inclusion $E\subset L^1_\s(\bt)$ is {\it proper}, so there is
a nontrivial, linear functional $\Phi_0$ on $L^1_\s(\bt)$, $\|\Phi_0\|\le1$, vanishing on $E$. Equivalently,
there is a function $\p_0\in L^\infty_\s(\bt)$ such that $|\p_0|\le1$ $[\s]$-almost everywhere, and
$$ \int_\bt x_{k,j}(t)\p_0(t)\,\s(dt)=\int_\bt y_{k,j}(t)\p_0(t)\,\s(dt)=0 $$
for all appropriate values of $j,k$. Since the functions $x_{j,k}$, $y_{j,k}$ are real valued, the function $\p_0$
can be taken real valued as well.

Consider now two measures $\s_{\pm}(dt):=(1\pm\p_0)\,\s(dt)$, $\s_{\pm}\in\cm_+(\bt)$. By the construction,
$\s_{\pm}\in\cm(B)$, and the representation $2\s=\s_+ + \s_-$ is nontrivial. Hence, $\s$ is not an extreme point
of $\cm(B)$, as claimed.

It remains to examine the general case when $B(0)\not=0$. The standard trick with the change of variables
(see, e.g., \cite[pp. 140--141]{Si1}) reduces this case to the one discussed above. Given $a\in\bd$, put
$$ b_a(z):=\frac{z+a}{1+\bar a z}\,, \qquad B_a(z):=B(b_a(z)), \qquad \o_a(z):=\o(b_a(z)). $$
If we replace $z$ with $b_a(z)$ in \eqref{thal}, we have
$$ \frac{1+B_a(z)\o_a(z)}{1-B_a(z)\o_a(z)}=i\b+\int_{\bt}\,\frac{t+b_a(z)}{t-b_a(z)}\,\s(dt), $$
and since
$$ \frac{t+b_a(z)}{t-b_a(z)}=i\b_{a,t}+\frac{1-|a|^2}{|t-a|^2}\,\frac{b_a(t)+z}{b_a(t)-z}\,, $$
we come to
$$ \frac{1+B_a(z)\o_a(z)}{1-B_a(z)\o_a(z)}=
i\b_a+\int_\bt \frac{1-|a|^2}{|t-a|^2}\,\frac{b_a(t)+z}{b_a(t)-z}\,\s(dt)=
i\b_a+\int_\bt \frac{\tau+z}{\tau-z}\,\s^{(a)}(d\tau). $$
It is clear that the map $\s\to\s^{(a)}$ is a bijection of $\cm(B)$ onto $\cm(B_a)$, which is also the bijection
between $\cm_{ext}(B)$ and $\cm_{ext}(B_a)$. Obviously, it is a bijection between $\cm_f(B)$ and $\cm_f(B_a)$,
and in this case $|\supp\s|=|\supp\s^{(a)}|$. But $B_a(0)=0$ with $a=z_d$, so the above argument applies.
The proof is complete.
\end{proof}

\smallskip

\noindent
{\it Proof of Theorem \ref{mainth}}.

It remains to show that each measure $\s\in\cm_{n+k}(B)$, $k=0,1,\ldots,n-1$ is the extreme point of $\cm(B)$.
Indeed, let $2\s=\s_1+\s_2$, then
$$ \s_j\in\cm_{n+p_j}, \qquad j=1,2, \qquad 0\le p_1,p_2\le k. $$
Since $\supp\s=\supp\s_1\cup\supp\s_2$, we have
$$ |\supp\s|=|\supp\s_1|+|\supp\s_2|-|\supp\s_1\cap\supp\s_2|, $$
or
$$ |\supp\s_1\cap\supp\s_2|=n+p_1+n+p_2-n-k=n+p_1+p_2-k\ge p_1+p_2+1. $$
By Lemma \ref{le1}, $\s_1=\s_2$, so $\s$ is the extreme point of $\cm(B)$, as claimed.

\section{Extreme points of $\cm(\thth)$ for generic inner functions}
\label{s3}

The case of generic inner functions is much more delicate.

Let us define a binary operation in the Schur class, which corresponds to taking a half-sum of measures under transformation $\cf$.
Precisely, given two Schur functions $s_1$, $s_2$, we denote by $s:=(s_1\circ s_2)_\thth$ the operation so that
\begin{equation}\label{ieppro}
\cf\Bigl((s_1\circ s_2)_\thth\Bigr)=\frac{\cf(s_1)+\cf(s_2)}2\,.
\end{equation}
It is a matter of elementary computation based on the relation
$$ \frac{1+\thth(z)s(z)}{1-\thth(z)s(z)}=\frac12\lp\frac{1+\thth(z)s_1(z)}{1-\thth(z)s_1(z)}+\frac{1+\thth(z)s_2(z)}{1-\thth(z)s_2(z)}\rp\,,
\quad s=(s_1\circ s_2)_\thth, $$
to check that
\begin{equation}\label{dthprod}
(s_1\circ s_2)_\thth:=\frac{s_0-\thth s_1 s_2}{1-\thth s_0}\,, \qquad s_0:=\frac{s_1+s_2}2\,.
\end{equation}
$(s_1\circ s_2)_\thth$ will be called a {\it $\thth$-product} of $s_1$ and $s_2$.

We list the main properties of the $\thth$-product in the statement below.

\begin{proposition}\label{thprod}
Let $\thth$ be a nonconstant inner function.

\noindent
{\rm (i)}. \  $\circ$ is a binary operation on the Schur class, which is idempotent, that is, $s\circ s=s$.

\noindent
{\rm (ii)}. \  $s=(s_1\circ s_2)_\thth$ is an inner function if and only if so are both $s_1$ and $s_2$.

\noindent
{\rm (iii)}. \  The equality holds
  \begin{equation}\label{sara}
  s=s_0+\thth h, \qquad s_0=\frac{s_1+s_2}2, \quad h\in H^\infty.
  \end{equation}
\end{proposition}
\begin{proof}
(i). Since $1-\thth s_0$ is an outer function \cite[Corollary II.4.8]{Gar}, $(s_1\circ s_2)_\thth$ belongs to the Smirnov class, so one has to verify that
$$ |(s_1\circ s_2)_\thth(t)|\le1 $$
for a.e. $t\in\bt$. Indeed,
\begin{equation*}
\begin{split}
|1-\thth s_0|^2 &=1+|s_0|^2-\re(\thth s_1+\thth s_2), \\
|s_0-\thth s_1s_2|^2 &=|s_0|^2+|s_1s_2|^2-|s_1|^2\re(\thth s_2)-|s_2|^2\re(\thth s_1),
\end{split}
\end{equation*}
so
\begin{equation*}
\begin{split}
&{} |1-\thth s_0|^2-|s_0-\thth s_1s_2|^2=1-|s_1s_2|^2-\re(\thth s_1)(1-|s_2|^2)-\re(\thth s_2)(1-|s_1|^2) \\
&=1-|s_1s_2|^2-|s_1|(1-|s_2|^2)-|s_2|(1-|s_1|^2) \\
&+ (|s_1|-\re(\thth s_1))(1-|s_2|^2)+(|s_2|-\re(\thth s_2))(1-|s_1|^2) \\
&=(1-|s_1s_2|)(1-|s_1|)(1-|s_2|)+(|s_1|-\re(\thth s_1))(1-|s_2|^2) \\
&+(|s_2|-\re(\thth s_2))(1-|s_1|^2)\ge0,
\end{split}
\end{equation*}
as needed.

By definition \eqref{dthprod}, $(s\circ s)_\thth=s$ for each $s\in\css$, so the operation is idempotent.

(ii). If both $s_1$ and $s_2$ are inner functions then, by the above calculation, so is $s_1\circ s_2$.
Conversely, assume that $(s_1\circ s_2)_\thth$ is an inner function, but $|s_1|<1$ a.e. on a set $E\subset\bt$ of positive measure. It follows from the
above calculation that
$$ |s_2|=1, \qquad |s_2|-\re(\thth s_2)=0 $$
a.e. on $E$.  Hence $\thth s_2=1$ a.e. on the set of positive measure, so $\thth$ is a unimodular constant.

(iii). It follows directly from the definition that
\begin{equation*}
\begin{split}
s_0 &=s+\thth\,\frac{s_1s_2-s_0^2}{1-\thth s_0}=s+\thth h, \\
h &=\frac{s_1s_2-s_0^2}{1-\thth s_0}=-\frac14\,\frac{(s_1-s_2)^2}{1-\thth s_0}\,.
\end{split}
\end{equation*}
Since $1-\thth s_0$ is an outer function, $h$ belong to the Smirnov class, and moreover,
$$ |h(t)=|s_0(t)-s(t)|\le 2 $$
a.e. on $\bt$, so $h\in H^\infty$, as claimed.
\end{proof}

\begin{remark}
The $\thth$-product is a nontrivial operation already for $\thth=1$. It is clear from the definition, that for $s_2=\thth=1$
one has $(s_1\circ s_2)_\thth=1$ for any $s_1\in\css$.
\end{remark}

\begin{definition}
A function $s\in\css$ is called {\it $\thth$-prime} if
$$ s=(s_1\circ s_2)_\thth \ \ \Rightarrow \ \ s=s_1=s_2. $$
\end{definition}
It is clear from \eqref{ieppro} that $\s=\cf(s)\in\cm_{ext}(\thth)$ if and only if $s$ is $\thth$-prime.
Equivalently, $s\in\css_{ext}(\thth)$ if and only if $s$ is $\thth$-prime.

\begin{example}
The Lebesgue measure $m$ is the only one which knowingly belongs to $\cm(\thth)$ for any inner function $\thth$.
We show that it is never in $\cm_{ext}(\thth)$. Indeed, $m=\cf(0)$, so
$$ 0=s_1\circ s_2=\frac{s_0-\thth s_1s_2}{1-\thth s_0}, $$
which is equivalent to
$$ s_0=\thth s_1s_2, \quad s_1+s_2=2\thth s_1s_2, \quad s_2=\frac{s_1}{2\thth s_1-1}\,. $$
Take any $s_1$ with $\|s_1\|\le 1/3$, so
$$ |s_2(z)|=\frac{|s_1(z)|}{|2\thth s_1-1|}\le\frac1{3(1-2|s_1(z)|)}\le1, $$
We see that $s_2\in\css$, and there is a nontrivial factorization $0=s_1\circ s_2$, so $0$ is not $\thth$-prime.
\end{example}

\begin{example}
It is easy to see that the Clark measures $\s_\a=\cf(\a)\in\cm_{ext}(\thth)$ for each $\a\in\bt$ and any inner function $\thth$. Indeed,
assume that
$$ \a=\frac{\o_0-\thth \o_1\o_2}{1-\thth\o_0}\,, \quad \o_0=\frac{\a+\thth\o_1\o_2}{1+\a\thth}\,, $$
and so
$$ (\a-\o_0)(1+\a\thth)=\thth(\a^2-\o_1\o_2). $$
But both functions $\a-\o_0=\a(1-\bar\a\o_0)$ and $1+\a\thth$ are outer, and so is their product, whereas the left side has a nontrivial
inner factor. Hence
$$ \a^2=\o_1\o_2, \quad \o_0=\a, $$
which implies $\o_1=\o_2=\o=\a$ is $\thth$-prime, as claimed.
\end{example}

\smallskip

To determine whether a function $\o\in\css$ is $\thth$-prime, we apply uniqueness conditions in the celebrated Nehari problem,
with the notion of minifunction playing a key role.

Given a function $g\in \cb$, the unit ball of $L^\infty(\bt)$, the Nehari problem concerns a set
$$ N(g):=\{g+H^\infty\}\cap\cb, $$
which is nonempty, since $g\in N(g)$. Following Adamjan--Arov--Krein \cite{aak2}, we call $g$ an {\it undeformable minifunction}
if $N(g)=\{g\}$. Clearly, $\|g\|_\infty=1$ for each such $g$.

\begin{lemma}\label{minif}
Given a nonconstant inner function $\thth$ and $s\in\css$, let $s\bar\thth$ be an undeformable minifunction. Then $s$ is $\thth$-prime.
\end{lemma}
\begin{proof}
Let $s=s_1\circ s_2$, by Proposition \ref{thprod}, (iii),
$$ s\bar\thth=s_0\bar\thth+h, \qquad h\in H^\infty. $$
But $s\bar\thth$ is an undeformable minifunction, so $s=s_0$ a.e. Next,
$$ s_0=\frac{s_0-\thth s_1s_2}{1-\thth s_0}\,, \qquad s_0^2=\left(\frac{s_1+s_2}2\right)^2=s_1s_2, $$
and so $s_1=s_2$ a.e., and $s$ is $\thth$-prime, as needed.
\end{proof}

\smallskip

\noindent
{\it Proof of Theorem \ref{finblext}}.

In view of Lemma \ref{minif}, we need to show that $b\bar\thth$ is the undeformable minifunction, where $b$ is a FBP, and
the inner function $\thth$ is not. According to the result in \cite[Remark 3.2]{aak2}, a unimodular function $g$, i.e., $|g|=1$ a.e. on $\bt$,
is not an undeformable minifunction if and only if it admits the following representation
\begin{equation}\label{minifcr}
g(t)=\frac{\xi_+(t)}{t\xi_-(t)}\,, \qquad \xi_\pm\in H^2_\pm,
\end{equation}
a.e. on $\bt$.

Assume that \eqref{minifcr} holds for $g=b\bar\thth$ as above. Then $tb\xi_-=\thth\xi_+$, and so $\thth\xi_+\in K_{tb}$. But the latter model space
consists of rational functions, see \eqref{finmodsp}, and the function $\thth\xi_+$ is certainly not such. The contradiction completes the proof.
\hfill $\Box$

\begin{remark}
For particular inner functions $\thth$ the result can be obtained by simpler means. For instance, let $\thth$ have infinitely many zeros $\{z_j\}_{j\ge1}$.
Let a FBP $b=b_1\circ b_2$. Then, by \eqref{sara}, we have $b=b_0+\thth h$ and so
$$ b(z_j)=b_0(z_j), \qquad j=1,2,\ldots. $$
The uniqueness condition in the Nevanlinna--Pick interpolation problem ($b$ is the FBP), see \cite[Theorem I.2.2 and Corollary I.2.3]{Gar}, implies
$$ b=b_0=\frac{b_1+b_2}2\,, $$
which leads to $b_1=b_2=b$, as needed.

The same argument applies in the case when the underlying singular measure for $\thth$ contains an atom, with the boundary Nevanlinna--Pick problem instead,
see \cite[Theorem 1.3]{bol08}.
\end{remark}

\smallskip

There is a striking similarity between the Aleksandrov formula \eqref{thal} and the Nevanlinna parametrization for an indeterminate Hamburger moment problem,
see \cite[Theorem 3.2.2] {Akh65}. Recall that the Hamburger moment problem deals with measures $\mu$ on the real line having prescribed moments of all orders.
It is called indeterminate if the set of such measures is infinite. It turns out that all measures $\mu$ arise from the Nevanlinna formula
\begin{equation}\label{nevan}
\frac{A(z)u(z)+B(z)}{C(z)u(z)+D(z)}=\int_\br \frac{\mu(dx)}{x-z}\,, \qquad z\in\bc_+,
\end{equation}
when the independent parameter $u$ runs over the class of analytic on the upper half-plane $\bc_+$ functions with the nonnegative imaginary part ($u=\infty$ is
included). A measure $\mu$ is called a {\it canonical solution of order} $n=0,1,\ldots$ if the parameter $u$ in \eqref{nevan} is a real rational function of
order $n$.

The set of all solutions is clearly convex, so the extreme points show up. The result in \cite[Corollary 3.4.3]{Akh65} states that each canonical solution
of order $n$ is the extreme measure for the indeterminate Hamburger moment problem. Theorem \ref{finblext} is a direct counterpart of the latter result.

\smallskip

\noindent
{\it Proof of Theorem \ref{divis}}.

To show that each divisor $\p$ of $\thth$, $\p\not=\thth$, is $\thth$-prime, we again apply Lemma \ref{minif} with $g=\p\bar\thth=\bar\psi$,
$\psi$ is a nonconstant inner function.  Indeed, assume, on the contrary, that $\bar\psi$ is not an undeformable minifunction.
Then there is a nonzero function $f\in H^\infty$ such that
$$ \|\bar\psi-f\|_\infty=\|1-\psi f\|_\infty\le 1. $$
Hence, $\re \psi f\ge0$ on the disk $\bd$, and, by \cite[Corollary II.4.8]{Gar}, the function $\psi f$ is outer. But it has a nontrivial
inner factor $\psi$, so the contradiction justifies the claim.

For any nonzero $s\in\css$ the function $\thth s^2=(s\circ (-s))_\thth\notin\css_{ext}(\thth)$, so in particular, $\thth\notin\css_{ext}(\thth)$.
The proof is complete.                                               \hfill $\Box$

\smallskip

Finally, we show that there are non-inner Schur functions $w\in\css_{ext}(\thth)$, as long as $\thth$ is not a FBP.

\begin{proposition}
Assume that $\thth$ is not a FBP. Then there are outer Schur functions $w\in\css_{ext}(\thth)$.
\end{proposition}
\begin{proof}
There is a point $\z\in\bt$ so that $\thth$ does not admit an analytic continuation across any open arc $\gg$ centered at $\z$. Fix such arc,
and take an outer Schur function $w$ such that $w^{-1}\in H^\infty$ and $|w|=1$ a.e. on $\gg$.
Following the line of reasoning from \cite[Example IV.4.2]{Gar}, we show first that $w\bar\thth$ is an undeformable minifunction.
Indeed, assume that there is a nonzero function $g\in H^\infty$ so that
$$ \|w\bar\thth-g\|_\infty=\|w-\thth g\|_\infty\le1, $$
and hence
$$ |w-\thth g|=|w| |1-\thth g/w|=|1-\thth g/w|\le 1 $$
a.e. on $\gg$. So $\re \thth g/w\ge0$ a.e. on $\gg$, and, by \cite[Exercise II.14 (a)]{Gar}, the inner factor of the function $\thth g/w\in H^\infty$
admits an analytic continuation across $\gg$, that contradicts our assumption.

By Lemma \ref{minif}, $w$ is $\thth$-prime, as claimed.
\end{proof}

I thank A. Kheifets for the valuable remarks about minifunctions and the result of Adamjan--Arov--Krein.


\begin{thebibliography}{99}



\bibitem{aak2}
V. M. Adamjan, D. Z. Arov, and M. G. Krein, Infinite Hankel matrices and generalized problems of Carath\'eodory--Fej\'er
and I. Schur, Functional Anal. Appl., {\bf 2} (1968), 269--281.

\bibitem{Akh65}
N. I. Akhiezer, {\it The Classical Moment problem and Some Related Questions in Analysis},
Hafner, NY, 1965.

\bibitem{al95}
A. B. Aleksandrov, On the existence of nontangential boundary values of pseudocontinuable functions, Jour. of Math. Sci.,
{\bf 87} no. 5, (1997), 3781--3787.

\bibitem{al96}
A. B. Aleksandrov, Isometric embeddings of coinvariant subspaces of the shift operator, Jour. of Math. Sci., {\bf 92} no. 1, (1998), 3543--3549.

\bibitem{bar}
A. Baranov, Differentiation in the de Branges spaces and embedding theorems, J. of Math. Sciences, {\bf 101} no. 2, (2000), 2881--2913.

\bibitem{bol08}
V. Bolotnikov, A uniqueness result on boundary interpolation, Proc. AMS, {\bf 136} no. 5, (2008), 1705--1715.

\bibitem{Bra68}
L. de Branges, {\it Hibert Spaces of Entire Functions}, Englewood Cliffs, NJ: Prentice-Hall, 1968.

\bibitem{CiMaR}
J. Cima, A. Matheson, and W. Ross, {\it The Cauchy Transform}, AMS Mathematical Surveys and Monographs, v. 125,
Providence, RI, 2006.

\bibitem{cla}
D. N. Clark, One dimensional perturbations of restricted shifts, J. Analyse Math., {\bf 25} (1972), 169--191.

\bibitem{GMR16}
S. R. Garcia, J. Mashreghi, W. T. Ross, {\it Introduction to Model Spaces and their Operators}, Cambridge studies in adv. math., {\bf 148},
Cambridge University Press, Cambridge, 2016.

\bibitem{Gar}
J. Garnett, {\it Bounded Analytic Functions}, Springer Science, LLC, New York, 2007.

\bibitem{jorus}
W. Jones, S. Ruscheweyh, Blaschke product interpolation and its application to the design of digital filters,
Constr. Approx., {\bf 3} (1987), 405--409.


\bibitem{sar}
D. Sarason, Nearly invariant subspaces of the backward shift. Operator Theory: Advances and Applications, {\bf 35} (1988), 281--493.

\bibitem{Si1}
B. Simon, {\it Orthogonal Polynomials on the Unit Circle}. Part 1: Classical Theory, AMS Colloquium Publ., v.54, 2005.




\end{thebibliography}
\end{document}